\theoremstyle{plain}
\newtheorem*{thm*}{Theorem}
\newtheorem{thm}{Theorem}[section]
\Crefname{thm}{Theorem}{Theorems}
\newtheorem*{lem*}{Lemma}
\Crefname{lem}{Lemma}{Lemmas}
\newtheorem*{claim*}{Claim}
\newtheorem{claim}[thm]{Claim}
\crefname{claim}{Claim}{Claims}
\Crefname{claim}{Claim}{Claims}
\Crefname{prop}{Proposition}{Propositions}
\Crefname{remar}{Remark}{Remarks}
\newtheorem{cor}[thm]{Corollary}
\crefname{cor}{Corollary}{Corollaries}
\newtheorem*{conj*}{Conjecture}
\crefname{conj}{Conjecture}{Conjectures}
\Crefname{qn}{Question}{Questions}
\newtheorem{obs}[thm]{Observation}
\Crefname{obs}{Observation}{Observations}
\Crefname{ex}{Example}{Examples}
\theoremstyle{definition}
\Crefname{prob}{Problem}{Problems}
\Crefname{defn}{Definition}{Definitions}
\theoremstyle{remark}
\renewenvironment{proof}[1][]{\begin{trivlist}
\item[\hspace{\labelsep}{\bf\noindent Proof#1.\/}] }{\qed\end{trivlist}}
\newcommand{\remove}[1]{}
\newcommand{\G}{\mathcal{G}}
\title{\vspace{-1 cm}
Long induced paths in expanders}
\date{}
\crefname{enumi}{observation}{parts}
\newcommand{\lL}{\lambda}
\newcommand{\GG}{\Gamma}
\newcommand{\TT}{\Theta}
\newcommand{\mb}[1]{\mathbb{#1}}
\newcommand{\sub}{\subseteq}
\newcommand{\sm}{\setminus}
\newcommand{\es}{\emptyset}
\author{
Nemanja Dragani\'c\thanks{
Mathematical Institute, University of Oxford, UK. Research supported by SNSF project 217926.\\
\emph{email}: \textbf{nemanja.draganic@maths.ox.ac.uk}.
}
\and
Peter Keevash\thanks{Mathematical Institute, University of Oxford, UK. Research supported by ERC Advanced Grant 883810.\\
\emph{email}: \textbf{keevash@maths.ox.ac.uk}.
}}
\begin{document} 
\maketitle
\begin{abstract}
We prove that any bounded degree regular graph with sufficiently strong spectral expansion
contains an induced path of linear length.
This is the first such result for expanders, strengthening an analogous result 
in the random setting by Dragani\'c, Glock and Krivelevich.
More generally, we find long induced paths in sparse graphs
that satisfy a mild upper-uniformity edge-distribution condition.
\end{abstract}

\section{Introduction}
The Longest Induced Path problem concerns finding a largest subset of vertices $S$ in a graph $G$ 
such that the induced subgraph $G[S]$ is a path. 
The decision problem of determining whether there exists an induced path of a specified length is NP-complete. It is difficult and interesting even on very specific graphs $G$. 
For example, the Snake-in-the-box Problem \cite{abbott1988snake,zemor1997upper,mutze2022combinatorial}, 
motivated by the theory of error-correcting codes  \citep{kautz1958unit},
asks for the longest induced path in the hypercube graph.
It also features in the study of Detour Distance  \citep{chartrand1993detour}
and the spread of information in social networks~\cite{gavril2002algorithms}. 
To illustrate the latter connection, we imagine that some person in a social network
shares some information with their connections (e.g.~by posting it on their profile),
who then might  share it with their connections, resulting to an information cascade,
in which the longest induced path represents the longest existing route for information transmission. 

The classical literature on Random Graphs considers many problems on finding induced subgraphs,
starting from several independent papers \cite{bollobas1976cliques, grimmett1975colouring, matula1976largest}
in the 1970's calculating the asymptotic independence number of $G(n,p)$ for fixed $p$ and large $n$.
This was later extended by Frieze \cite{frieze1990independence} to $p=c/n$ for large constant $c$
(as noted in~\cite{cooley2021large}, this extends to all $p\geq c/n$). 
Erdős and Palka \cite{erdos1983trees} initiated the study of induced trees in $G(n, p)$,
which developed a large literature \cite{de1986induced,frieze1987large,kuvcera1987large,luczak1988maximal}
before its final resolution  by de la Vega~\cite{de1996largest}, showing that 
the size of the largest induced tree matches the asymptotics found earlier for the independence number:
it is $\sim2 \log_q(np)$ for all $p\geq c/n$ for large constant $c$. 
The Longest Induced Path problem in $G(n,p)$ is also classical  \cite{luczak1993size,suen1992large},
and was recently resolved asymptotically by Dragani\'c, Glock and Krivelevich \cite{draganic2022largest}.

We extend this line of research to spectral expanders in the following result.
To set up notation, let $G$ be a $d$-regular graph on $n$ vertices
whose adjacency matrix has eigenvalues $\lL_1 \ge \dots \ge \lL_n$.
Following Alon, we call $G$ an $(n,d,\lL)$-graph with $\lL=\max\{\lL_2,-\lL_n\}$.

\begin{thm}\label{thm:ndlambda}
Let $G$ be an $(n,d,\lambda)$-graph with $\lambda<d^{3/4}/100$ and $d<n/10$. 
Then $G$ contains an induced path of length $\frac{n}{64d}$.
\end{thm}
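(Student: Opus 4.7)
The plan is to build the induced path via a DFS-style algorithm and bound its length using the expander mixing lemma (EML). We maintain an induced path $P$ and a set $D$ of ``dead'' vertices, both initially empty, and iterate as follows. Let $v$ denote the current endpoint of $P$; if $P$ is empty, seed it with any $w\in V\setminus D$. Otherwise, search for $u\in N(v)\setminus(P\cup D)$ whose unique $P$-neighbor is $v$: push $u$ onto $P$ if such a $u$ exists, else pop $v$ into $D$. The procedure terminates when $D=V$, and we let $\ell^*$ denote the maximum value of $|P|$ during the execution, aiming for $\ell^*\geq n/(32d)$.

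Assume for contradiction that $\ell^* < n/(32d)$. For each $v\in V$, let $(P_v,D_v)$ denote the state at $v$'s pop. The failed-extension condition, combined with $|N(v)\cap P_v|\leq 1$ (since $v$ is an endpoint), gives
\[ d \leq 1 + |N(v)\cap D_v| + e(N(v),\,P_v\setminus\{v\}), \]
since every living neighbor of $v$ must have a second neighbor in $P_v$. Summing over all $n$ pops, the key identity $\sum_v |N(v)\cap D_v| = |E(G)| = nd/2$ holds because every edge is counted exactly once, at its later-popped endpoint. For the last term, EML and Cauchy--Schwarz yield
\[ \sum_v e(N(v),P_v\setminus\{v\}) \leq \tfrac{d^2}{n}\sum_v |P_v| + \lambda\sqrt{d}\sum_v \sqrt{|P_v|} \leq d^2\ell^* + \lambda n\sqrt{d\ell^*}. \]
Combining these yields $nd/2 - n \leq d^2\ell^* + \lambda n\sqrt{d\ell^*}$. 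Substituting $\ell^* = n/(32d)$ and invoking $\lambda<d^{3/4}/100$ and $d<n/10$ is designed to produce the desired contradiction.

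The main obstacle is the fine calibration of the constants in this inequality. The EML error term $\lambda n\sqrt{d\ell^*}$ is the binding constraint, and balancing it against the left-hand side is exactly what forces the strong spectral assumption $\lambda<d^{3/4}/100$; likewise, the density term $d^2\ell^*/n$ is what requires the density bound $d<n/10$. In the sparser regime of the theorem, the trivial bound $|P_v|\leq\ell^*$ is likely too loose on average, so the argument may need a refined accounting of the pop-time distribution of $|P_v|$ (which for a typical DFS trajectory is substantially smaller than the peak), or an algorithmic enhancement--e.g., rotations at the endpoint, stopping rules that curtail wasteful backtracking, or restricting attention to a carefully chosen random subset--to extract the full strength of the hypotheses.
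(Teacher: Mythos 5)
Your overall template (a DFS that maintains an induced path, plus the Expander Mixing Lemma to reach a contradiction) is the right family of ideas, but the specific accounting you propose cannot work, and you correctly sense the problem at the end without resolving it. Here is the concrete obstruction: your final inequality requires the EML error term $\lambda n\sqrt{d\ell^*}$ to be smaller than (roughly) $nd/2$. With $\ell^*=n/(32d)$ this error term equals $\lambda n\sqrt{n/32}$, so you need $\lambda\lesssim d/\sqrt{n}$. But the Alon--Boppana bound forces $\lambda\gtrsim\sqrt{d}$, so you would need $\sqrt{d}\lesssim d/\sqrt{n}$, i.e.\ $n\lesssim d$, which directly contradicts the hypothesis $d<n/10$. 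In other words, in the entire parameter regime of the theorem the error term dominates the main term, independently of how strong the spectral gap hypothesis is. The root cause is that you apply EML once per popped vertex (so $n$ times), and the square-root error $\lambda\sqrt{d|P_v|}$ accumulates linearly in $n$; EML is simply not sharp enough to be applied to $n$ small vertex pairs in this way.

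The paper avoids this by changing the algorithm, not just the analysis. Instead of popping $v$ only when no legal extension exists, it pops $v$ \emph{eagerly}, as soon as a $\tfrac{d}{2}$-fraction of its neighbours have become unusable, and it records \emph{why} they became unusable: $v$ goes to $S_1$ if at least $d/2$ of its neighbours lie in $P\cup S_1\cup S_2$, and to $S_2$ if at least $d/2$ of its neighbours lie in $N_G(P-v)$. A key structural observation (their Observation~(E): a vertex ever sent to $S_2$ is sent there immediately after joining $P$) shows that for any $v\in S_2$, every vertex that was on the path when $v$ was discarded later lands in $S_1$ or remains on the final path; hence the offending neighbourhood $\GG(P_v)$ is contained in $\GG(S_1\cup P_{\mathrm{final}})$, a set of size $O(d(\ell+s_1))$. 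This converts the per-vertex accounting into two global edge-count inequalities, $e(S_1,\,P\cup S_1\cup S_2)\ge s_1 d/4$ and $e(S_2,\,\GG(S_1\cup P))\ge s_2 d/4$, each contradicting a single application of EML to one suitably sized pair of sets. With $s_1=\ell=n/(32d)$ and $s_2=(\lambda^2/d^2)n$, the EML computations close, and the hypothesis $\lambda<d^{3/4}/100$ is exactly what makes these two estimates work. So the missing idea is the threshold-based popping together with the $S_1/S_2$ split; merely sharpening your bound on $\sum_v\sqrt{|P_v|}$ or tweaking the search order cannot fix the loss, since even a single pop near the peak length already costs $\lambda\sqrt{d\ell^*}\approx\sqrt{dn/32}$, and you incur this roughly $n$ times.
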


One may compare Theorem \ref{thm:ndlambda} with an analogous result in the random setting
due to Dragani\'c, Glock and Krivelevich~\cite{draganic2022short}.
Their result is stated in a pseudorandom setting
but essentially concerns random graphs,
due to its strong assumptions on edge distribution
(sublinear sets have average degree $<3$).

One may also interpret Theorem \ref{thm:ndlambda} 
within the theory of Graph Sparsity (see  \cite{nevsetvril2012sparsity}),
where one general problem considers some graph class $\G$
and asks for the optimal function $f_\G:\mb{N}\to\mb{N}$ such that
any graph in $\G$ with a path of length $k$
must contain an induced path of length $f_\G(k)$.
For example, if $\G$ is the class of $C$-degenerate graphs 
then Nešetřil and Ossona de Mendez \cite{nevsetvril2012sparsity}
showed that $f_\G(k) = \Omega(\log \log k)$,
whereas Defrain and Raymond~\cite{defrain2024sparse}
showed that $f_\G(k) = O(\log\log k)^2$ even for $C=2$.
Letting $\G$ be the class of spectral expanders as in Theorem  \ref{thm:ndlambda},
also assuming constant average degree,
it is well-known that any such $G$ 
contains a path of length $n-o(n)$,
so Theorem  \ref{thm:ndlambda} shows that $f_\G(k)=\TT(k)$.

Our next result is in the same spirit as Theorem \ref{thm:ndlambda},
replacing spectral expansion with an edge distribution condition,
which is a mild upper-uniformity condition. 
Here and throughout the paper we write
\[ \GG(X) = \GG_G(X) = \{ v \in V(G): N(v) \cap X \ne \es \}
\ \text{ and } \
e(X,Y)=e_G(X,Y) = \sum_{x \in X, y \in Y} 1_{xy \in E(G)}.\]

\begin{thm}\label{thm:upperuniform}
Let $G$ be a graph on $n$ vertices with minimum degree $d \ge 2^8$.
Suppose for some $C>1$ that for all $X,Y \sub V(G)$
of sizes $|X|=\frac{n}{2^4 Cd}$ and $|Y|= \frac{n}{2^8 C}$ we have 
$e(X,Y)< 2^5 C|X||Y|\frac{d}{n}$ and $e(\Gamma(X),Y) < C|X||Y|\frac{d^2}{n}$.
Then $G$ contains an induced path of length $\frac{n}{2^5 Cd}$.
\end{thm}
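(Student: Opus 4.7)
The plan is a greedy depth-first-search-style algorithm to build an induced path of length $L := n/(2^4Cd)$, combined with a double-counting argument against the two edge distribution hypotheses. I maintain an induced path $P$ (starting at a single vertex $v_0$) and a ``dead'' set $D = \varnothing$. At each step, if the current endpoint $v$ of $P$ has a neighbor $w \notin P \cup D$ that is not adjacent to $P \setminus \{v\}$, then I extend $P$ by appending $w$; otherwise I move $v$ from $P$ to $D$ (and restart $P$ at a new vertex of $V \setminus D$ if $P$ becomes empty). The algorithm halts on success when $|P| = L$. The critical feature is that whenever $v$ is moved into $D$ at some time $t$ with state $(P_t, D_t)$, we have $N(v) \subseteq P_t \cup D_t \cup \Gamma(a_v)$, where $a_v := P_t \setminus \{v\}$ is an induced path of length $<L$ consisting of $v$'s strict ancestors in the DFS tree built so far.

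Suppose for contradiction the algorithm does not reach $|P|=L$; consider the first moment when $|D| = K := n/(2^8 C)$, at which $|P| < L$. Pad $P$ arbitrarily to a set $X$ of size $L$ and set $Y := D$; the hypotheses give by direct calculation
\[
e(P, D) \le e(X, D) < 2K
\quad\text{and}\quad
e(\Gamma(P), D) \le e(\Gamma(X), D) < Kd/16.
\]
On the other hand, summing the critical feature over $v \in D$, using $|N(v) \cap P_{t_v}| = 1$ (as $v$ is an endpoint of the induced path $P_{t_v}$), yields $\sum_{v \in D} |N(v) \cap \Gamma(a_v)| \ge (d-1)K - 2e(G[D])$. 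The quantity $e(G[D])$ is controlled via condition 1 applied to a partition of $D$ into $\lceil K/L \rceil = d/16$ size-$L$ subsets, giving $e(G[D]) = O(K)$ and hence $\sum_v |N(v) \cap \Gamma(a_v)| = \Omega(dK)$. The contradiction would follow if this mass lay predominantly in $\sum_v |N(v) \cap \Gamma(P)| = e(\Gamma(P), D)$, violating the $Kd/16$ upper bound.

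The main obstacle is thus handling the ``bad ancestors'' contribution $\sum_v |N(v) \cap \Gamma(a_v \cap D)|$, i.e., vertices $v \in D$ whose DFS ancestors have themselves been backtracked into $D$ after $v$'s own kill, so that $\Gamma(a_v)$ spills beyond $\Gamma(P)$. Swapping order of summation expresses this quantity as $\sum_{w \in D} e(N(w), \mathrm{desc}_T(w))$, where $\mathrm{desc}_T(w)$ is $w$'s DFS subtree (automatically contained in $D$). A naive per-$w$ application of condition 2 with $X = \{w\}$ padded to size $L$ gives only the trivial bound $Kd/16$ for each term, too weak by a factor of $K$. A sharper treatment is required, likely constructing a single ``skeleton'' set $X$ of size $L$ that simultaneously captures the heavily-used ancestor segments of the DFS tree, exploiting the structural bounds $|T|\leq L+K$ and $\mathrm{depth}(T)<L$, which force $\sum_{w \in D} s(w) = O(KL)$ and thereby concentrate the ancestor-weight on an $L$-sized subset to which condition 2 can be applied once.
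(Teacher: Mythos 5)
Your outline has the right flavour — a DFS-style construction together with a double count against the two upper-uniformity hypotheses — but the gap you flag in your last paragraph is not a technicality to be smoothed over: it is precisely the obstacle the paper had to engineer around, and your argument does not yet contain a mechanism for it. When a vertex $v$ is killed, its blocked neighbours are explained by $\Gamma(a_v)$ with $a_v = P_{t_v}\setminus\{v\}$; but at the moment you stop and count, many of those ancestors have themselves migrated into $D$, so $\Gamma(a_v)\not\subseteq\Gamma(P_{\mathrm{final}})$. Condition (2) bounds $e(\Gamma(X),Y)$ only for a single set $X$ of the prescribed size, and the union $\bigcup_{v\in D} a_v$ can have size up to $L+K$, so there is no once-and-for-all application available; the per-ancestor version is off by a factor of $K$, exactly as you observe. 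The ``skeleton'' idea is not developed enough to recover the lost factor, and it is unclear that it can be: the ancestor sets $a_v$ genuinely wander over $D$, and nothing in your setup pins them to a single $L$-sized set.

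The paper avoids this by splitting the discard set by \emph{cause of death} rather than trying to control a single $D$. A vertex is sent to $S_2$ if at least $d/2$ of its neighbours lie in $N_G(P-v)$, and to $S_1$ if at least $d/2$ lie in the visited set $P\cup S_1\cup S_2$. The decisive structural point is that the $S_2$-test depends only on $P-v$, which is frozen from the moment $v$ is pushed, so an $S_2$-death happens, if at all, in the very next round after pushing. Hence when $v$ dies to $S_2$, every vertex $u\in a_v$ has already been at the top of the stack at least once (namely when $u$'s child was pushed) and failed the $S_2$-test then; since that test never changes for $u$, $u$ can later only die to $S_1$. Consequently every $S_2$-vertex has $\ge d/2$ of its neighbours in $\Gamma(S_1\cup P_{\mathrm{final}})$, a \emph{fixed} set of size at most $\ell+s_1$, so condition (2) applies in one shot; $S_1$-vertices are handled symmetrically by condition (1). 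This separation is the missing idea in your proposal; without it, or a substitute of equal strength, the ``bad ancestors'' term $\sum_{v}|N(v)\cap\Gamma(a_v\cap D)|$ is not controlled and the contradiction does not close.
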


The numerical constants in Theorem \ref{thm:upperuniform} are quite flexible;
we have selected a convenient choice that is whp satisfied in a subgraph $G'\subseteq G(n,d/n)$ of minimum degree $\delta(G')\geq 0.9d$ and size $N=|V(G')|\geq 0.9n$, for $d \ge 2^{20}$ and $C=100$, say.
Superimposing $2^{20} N/d$ vertex-disjoint $2^{-20} d$-cliques 
has very little effect on the upper-uniformity condition,
and clearly ensures that there is no induced path of length~$>2^{20} N/d$,
so Theorem \ref{thm:upperuniform} is tight up to the constant factor.

Next we will state our main result, which will easily imply the two results stated above.
For a third application in Ramsey Theory
(a modest improvement on the multicolour induced-size Ramsey numbers of paths)
we consider a more general problem where we are given graphs $G' \sub G$ 
and look for a long path in $G'$ that is induced in $G$.

\begin{thm} \label{thm:DFSnew}
Suppose $G$ and $G'$ are graphs on the same vertex set $V$ with $G' \sub G$.
Let $d$ be the minimum degree of $G'$ and 
$s_1,s_2,\ell$ be positive integers with $s_1+s_2+\ell<|V|$.
Suppose that
\begin{enumerate}[label=(\arabic*)]
    \item $e_{G'}(X,Y)< \frac{d}{4}s_1$ for all $X,Y\subseteq V$ with $|X| = s_1$ and $|Y| = \ell+s_1+s_2$.
    \item $e_{G'}(\Gamma_G(X),Y)< \frac{d}{4}s_2$ for all disjoint $X,Y\subseteq V$ with $|X| = \ell+s_1$ and $|Y| = s_2$.
\end{enumerate}
Then $G'$ contains a path of length $\ell$ which is induced in $G$, and can be found in time $O(e(G))$.
\end{thm}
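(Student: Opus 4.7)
The plan is to run a variant of Depth-First Search on $G'$ that additionally enforces the induced-in-$G$ condition when trying to extend the current path. I maintain disjoint sets $A$ (unexplored), $B$ (fully explored), and a stack $P=(v_1,\dots,v_k)$, starting with $A=V$ and $B=P=\varnothing$. At each step, if $P$ is non-empty with top $v$, I search for some $u\in A$ with $uv\in E(G')$ and no $G$-neighbor in $P\setminus\{v\}$; if such a $u$ exists I push it onto $P$, and otherwise I pop $v$ and move it to $B$. When $P$ becomes empty and $A\neq\varnothing$, I move an arbitrary element of $A$ to $P$. Maintaining a counter $c(u)$ of the number of $G$-neighbors of $u$ currently in $P\setminus\{\text{top}\}$ for each $u$, updated whenever $P$ changes, keeps the running time at $O(e(G))$.

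The goal is to show that $|P|$ reaches $\ell$ at some moment. Suppose instead that $|P|\le\ell-1$ throughout the execution. The structural fact I rely on is a variant of the standard DFS ``no $A$-to-$B$ edge'' property: for any time $T$ and any $G'$-edge $(u,b)$ with $u$ currently in $A$ and $b$ currently in $B$, at the moment that $b$ was popped $u$ was still in $A$ but was not pushed, so $u$ must have a $G$-neighbor in $N_b:=P_b\setminus\{b\}$, the set of ancestors of $b$ in the DFS tree, with $|N_b|\le\ell-1$. Choose $T^*$ to be the moment at which $|A|$ first drops to $s_1$, and write $A^*,B^*,P^*$ for the sets then, so that $|A^*|=s_1$, $|P^*|\le\ell-1$, and $|B^*|=|V|-s_1-|P^*|$. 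Applying condition~(1) with $X=A^*$ and $Y=A^*\cup P^*\cup Z$ for any $Z\subseteq B^*$ with $|Z|=\ell+s_2-|P^*|$ yields $e_{G'}(A^*,A^*\cup P^*)<\tfrac{d}{4}s_1$, and combining this with $\sum_{u\in A^*}\deg_{G'}(u)\ge d\,s_1$ from the minimum-degree hypothesis gives
\[
e_{G'}(A^*,B^*) \;>\; \tfrac{3d}{4}s_1.
\]

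The hard part is then to convert this abundance of edges into a contradiction with condition~(2), which requires $X$ of size $\ell+s_1$ and $Y\subseteq V$ of size $s_2$ with $e_{G'}(\Gamma_G[X],Y)\ge\tfrac{d}{4}s_2$. The naive attempt, taking $X\supseteq A^*$ so that $\Gamma_G[X]\supseteq A^*$ and then choosing $Y\subseteq B^*$ to maximise $e_{G'}(A^*,Y)$, is too weak because averaging over $s_2$-subsets of $B^*$ loses a factor of $|B^*|/s_2$. My plan is to exploit the DFS-tree structure more carefully: every $G'$-edge $(u,b)\in e_{G'}(A^*,B^*)$ admits a witness $G$-edge $(u,w)$ with $w$ an ancestor of $b$ in the DFS tree, and the ancestor sets of consecutively popped vertices overlap substantially because each such ancestor must have been pushed by the time of the relevant pop. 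In particular $|\bigcup_{i\le k}N_{b_i}|\le k+\ell-1$, which suggests taking $X$ to be the ``already-pushed'' set at some time just after the $k$-th pop (padded up to size $\ell+s_1$) and $Y$ a carefully chosen $s_2$-subset of $\{b_1,\dots,b_k\}$. The main subtlety, which I expect to be the crux of the proof, is to choose $T^*$ and $k$ so that both conditions can be applied together, the parameters $s_1,s_2,\ell$ line up, and the witness $G$-edges force enough of the $A^*$-to-$B^*$ edges to pass through $\Gamma_G[X]$ into $Y$ to beat the threshold $\tfrac{d}{4}s_2$.
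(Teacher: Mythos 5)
Your proposal is incomplete, and you have correctly identified where it is incomplete: the step that converts abundance of $G'$-edges from $A^*$ to $B^*$ into a violation of condition~(2) is genuinely missing, and I do not believe your plain-DFS framework can supply it without a structural change to the algorithm. The difficulty is exactly what you observe: condition~(2) only applies with $|X|=\ell+s_1$, but in plain DFS the ``ancestor sets'' $P_b\setminus\{b\}$ of the popped vertices $b\in B^*$ can have a union of size comparable to $|B^*|$, which may be $\Theta(n)$; averaging gives you $e_{G'}(A^*,Y)$ diluted by $s_2/|B^*|$, and there is no obvious way to select $Y$ of size $s_2$ so that both the union of its witness ancestors is small \emph{and} the edge count from $A^*$ is large.

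The paper gets around this by changing the algorithm itself, not just the analysis. Vertices are discarded the moment a threshold is crossed, and they are discarded into \emph{two} different buckets according to the reason: $v$ goes to $S_2$ if $\ge d_{G'}(v)/2$ of its $G'$-neighbours are blocked by $N_G(P-v)$, and to $S_1$ if $\ge d_{G'}(v)/2$ of its $G'$-neighbours are already in $P\cup S_1\cup S_2$ (otherwise the path extends). The crucial consequence of this rule is the paper's ``key property'': a vertex enters $S_2$ only in the round immediately after it was pushed, since $P-v$ never changes while $v$ is on top. This means that when a vertex $v$ was blocked, every vertex of $P-v$ at that moment either stays on $P$ to the end, or later goes to $S_1$ --- it can never go to $S_2$ (otherwise it would have left the stack before $v$ was ever pushed). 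Hence at the stopping time, every vertex of $S_2$ has $\ge d/2$ of its $G'$-neighbours inside $\Gamma_G(S_1\cup P)$, with $|S_1\cup P|\le \ell+s_1$, which is precisely what lets condition~(2) be applied with the correct $|X|$. Your plain DFS has no analogue of this bookkeeping: it lumps all discarded vertices together, so there is no small set whose $G$-neighbourhood is guaranteed to absorb the blocking edges.

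Two further remarks. First, your stopping rule (``first time $|A|=s_1$'') and your use of condition~(1) are in the right spirit and do recover something close to the paper's Case~1; the paper instead stops at the first round where $|S_1|=s_1$ or $|S_2|=s_2$ and argues by cases, which is cleaner because the two conditions of the theorem are tailored to the two buckets. Second, the bound you derive, $e_{G'}(A^*,B^*)>\tfrac{3d}{4}s_1$, while correct, is not actually what is needed: the relevant quantity for condition~(2) is edges \emph{into} a small set $Y$ from $\Gamma_G[X]$, not edges \emph{out of} $A^*$. In the paper's argument the roles are reversed --- it is the discarded vertices themselves ($S_1$ or $S_2$) that serve as the side with many incident edges, and $P\cup S_1\cup S_2$ or $\Gamma_G(S_1\cup P)$ that serves as the small target set; this reversal is what makes the degree counting close.
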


Our results are based upon a new version of the DFS graph search algorithm, 
tailored for applications on graphs with a weak local sparsity condition. 
The algorithm runs in time $O(e(G))$, which is clearly best possible.
We describe this algorithm and prove our main theorem in  \Cref{sec:algorithm};
the applications will be deduced in \Cref{sec:applications}.

\medskip

\noindent \textbf{Notation.} Besides the notation $\GG(X)$ and $e(X,Y)$ mentioned above,
we also write $N(X) = \GG(X) \sm X$ for the external neighbourhood of $X$ in $G$.
We systematically avoid rounding notation for clarity of presentation whenever it does not impact the argument.

\section{The algorithm and its analysis}\label{sec:algorithm}

\begin{figure}

 \textbf{Algorithm} 

\begin{flushleft}
We initialise $T:=V$, $P:=\emptyset$, and $S_1:=\emptyset$ and $S_2:=\emptyset$. \\
We carry out our algorithm in rounds, stopping when $S_1\cup S_2=V$. 
\end{flushleft} 

 \textbf{Round}

\begin{enumerate}[noitemsep]
    \item If $P=\emptyset$, take the next vertex $v$ from $T$ according to $\sigma$, remove it from $T$ and push it to $P$. Otherwise, let $v$ be the vertex from the top of the stack of $P$. 
    \item\label{step:dispose to S_2} If at least $\frac{d_{G'}(v)}{2}$ of the neighbours of $v$ in $G'$ are inside $N_{G}(P-v)$, then remove $v$ from $P$ and add it to $S_2$.
    \item Else, if at least $\frac{d_{G'}(v)}{2}$ of the neighbours of $v$ in $G'$ are inside $P\cup S_1\cup S_2$, then remove $v$ from $P$, and add it to $S_1$.
    \item Otherwise, there is a vertex in $N_{G'}(v)\setminus (S_1\cup S_2\cup P\cup N_G(P-v))\subseteq T$. Push this vertex to $P$, and remove it from $T$.
\end{enumerate}

\caption{The algorithm used in the proof of \Cref{thm:DFSnew}.}  
\label{fig1}
\end{figure}

In this section we describe and analyse our algorithm (see \Cref{fig1}), 
thus proving our main result, Theorem \ref{thm:DFSnew}.
As in the statement, we consider graphs $G,G'$ on $V$ with $G' \sub G$.
Let $\sigma$ be an arbitrary ordering of $V$.
Throughout the algorithm we maintain a partition of $V$
into four sets $T$, $P$, $S_1$ and $S_2$,
where $T$ will be the set of vertices which have not yet been considered,
$P$ will be the set of vertices in the current path, 
and $S_1$ and $S_2$ will be two different sets of discarded vertices. 
The vertices of $P$ are kept in a stack so that following the order in the stack
gives a path in $G'$ which is induced in $G$.

In the proof of \Cref{thm:DFSnew}, we will rely on the following observations.
We omit their proofs, as parts  \ref{p:P is path} to \ref{p:grows by 1} are self-evident,
and the complexity analysis for  \ref{p:complexity} 
is similar in spirit to that of the standard DFS algorithm.
Only the key property in  \ref{p:S_2 immediate} requires a little thought.
The key point is that whenever $v$ is added to $S_2$
the path $P-v$ is the same as when $v$ was added to $P$,
so if it satisfies the condition for joining $S_2$ at any point
in the algorithm then it does so immediately when added to $P$.

\begin{obs}
    The following hold during the execution of the algorithm.
\begin{enumerate}[label=(\Alph*)]
    \item\label{p:P is path} The vertices in $P$ form a path in $G'$ which is induced in $G$.
    \item\label{p:vertex stays in PS1S2} If a vertex $v$ lands in $P\cup S_1\cup S_2$, then it stays in this set until the algorithm terminates.
    \item\label{p:end state} When the algorithm terminates, we have $S_1\cup S_2=V$.
    \item\label{p:grows by 1} In each round, at most one vertex is added to $S_1\cup S_2$.
    \item\label{p:S_2 immediate} \textbf{(key property)} If a vertex is added to $S_2$, it is added there immediately in the round after the round in which it appears in $P$.
    \item\label{p:complexity} The algorithm can be implemented in time $O(e(G))$.
\end{enumerate}
\end{obs}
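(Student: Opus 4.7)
The plan is to verify the six claims in order. Parts \ref{p:P is path}--\ref{p:grows by 1} reduce to straightforward round-by-round invariant bookkeeping, and \ref{p:complexity} is a standard DFS-style amortized analysis. Part \ref{p:S_2 immediate}, where the stack structure truly bites, is the main obstacle.

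For \ref{p:P is path} I would maintain as an invariant that $P$, read from bottom to top, encodes a path in $G'$ that is induced in $G$. Pops in steps 2 and 3 trivially preserve this, while the push in step 4 adjoins a vertex $w \in N_{G'}(v)$, extending the $G'$-path, and the exclusion $w \notin N_G(P-v)$ forbids any new $G$-chord to earlier path vertices. Part \ref{p:vertex stays in PS1S2} holds because $S_1$ and $S_2$ are only ever written to and any vertex leaving $P$ lands in $S_1 \cup S_2$. Part \ref{p:grows by 1} is immediate, since each round executes at most one of steps 2 or 3 and no other step touches $S_1 \cup S_2$. For \ref{p:end state}, each round strictly decreases $|T|$ (at steps 1 and 4) or strictly increases $|S_1|+|S_2|$ (at steps 2 and 3), so the loop terminates in at most $2|V|$ iterations, at which point the stopping rule forces $S_1 \cup S_2 = V$. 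That step 4 is always executable when $P \ne \es$ and neither of steps 2, 3 fires follows by counting: strictly fewer than $d_{G'}(v)/2$ neighbours of $v$ lie in $N_G(P-v)$ and strictly fewer than $d_{G'}(v)/2$ lie in $P \cup S_1 \cup S_2$, so some neighbour lies outside both sets and hence in $T$.

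The heart of the argument is \ref{p:S_2 immediate}, which rests on a stack-structure invariant. Fix a vertex $v$ that is pushed onto $P$ in some round $r$, and consider any subsequent round while $v$ still lies in $P$: every push or pop action takes place at a position at or above $v$ in the stack. Hence, at every round in which $v$ is back at the top of the stack, the sub-stack strictly below $v$ is identical to what it was at round $r+1$; in particular $P-v$, and therefore $N_G(P-v)$, is the same at each such round. Since step 2's condition depends only on $N_{G'}(v)$ and $N_G(P-v)$, it has the same truth value whenever $v$ is at the top. As round $r+1$ is the first such round, step 2 either fires for $v$ in round $r+1$ or never, proving \ref{p:S_2 immediate}.

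For \ref{p:complexity}, I would maintain two per-vertex counters, $c_1(u) = |N_{G'}(u) \cap (P \cup S_1 \cup S_2)|$ and $c_2(u) = |N_G(u) \cap P|$, each updated in time $O(d_{G'}(w))$ or $O(d_G(w))$ per push/pop of a vertex $w$, for a total update cost of $O(e(G))$. Step 3 then reduces to an $O(1)$ test on $c_1(v)$; step 2 need only be tested when $v$ is first pushed (by \ref{p:S_2 immediate}), requiring $O(d_{G'}(v))$ work by iterating through $N_{G'}(v)$ and querying $c_2$ (correcting for $v$'s own contribution); and step 4 keeps a pointer into $N_{G'}(v)$ whose skipped neighbours are permanently invalid while $v$ sits on the stack (since $P \cup S_1 \cup S_2$ is monotone and $N_G(P-v)$ is constant at the moments $v$ is at the top), yielding $O(d_{G'}(v))$ amortised work per vertex. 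Summing gives the total $O(e(G))$ bound.
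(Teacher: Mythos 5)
Your proposal is correct and follows essentially the same approach that the paper indicates. In the paper, parts \ref{p:P is path}--\ref{p:grows by 1} are declared self-evident and \ref{p:complexity} is remarked to be a standard DFS-style amortised analysis; only \ref{p:S_2 immediate} is addressed explicitly, via exactly the observation you make — that $P-v$ (and hence $N_G(P-v)$) is unchanged while $v$ remains on the stack, so the step 2 test has a fixed truth value at every round in which $v$ is on top, and in particular is decided the first time, in round $r+1$. Your fleshed-out arguments for the bookkeeping parts (the potential $|S_1|+|S_2|-|T|$ for termination, the counting argument for why step 4 is always executable when steps 2 and 3 fail) and for the counter-plus-pointer implementation of \ref{p:complexity} are all sound and in the same spirit as the paper's (commented-out) appendix.
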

     
Now we analyse the algorithm, thus proving our main result.

\begin{proof}[ of Theorem \ref{thm:DFSnew}]
We consider the algorithm above applied to $G$ and $G'$. 
Suppose for a contradiction that it does not find the required path.
By \Cref{p:P is path} at each step of the algorithm we have $|P|<\ell$.
Consider the round after which $|S_1|=s_1$ or $|S_2|=s_2$ for the first time; 
this has to occur by \Cref{p:end state} and \Cref{p:grows by 1}. 
We distinguish two cases, according to whether $|S_1|=s_1$ or $|S_2|=s_2$.
 
The first case is that $|S_1|=s_1$, and so $|S_2|<s_2$. 
Each vertex added to $S_1$ has at  least $\frac{d}{2}$ of its $G'$-neighbours 
in $P\cup S_1\cup S_2$ when it is added, and these neighbours
remain in $P\cup S_1\cup S_2$ by~\Cref{p:vertex stays in PS1S2}. 
Thus at the round under consideration, 
we have $|P \cup S_1 \cup S_2| \le \ell+s_1+s_2$
and $e_{G'}(S_1,S_1\cup S_2\cup P)\geq s_1\frac{d}{4}$, 
a contradiction with the first condition of the theorem.

The second case is that $|S_2|=s_2$, and so  $|S_1|<s_1$. 
Each vertex added to $S_2$ had at least $\frac{d}{2}$ of its $G'$-neighbours 
in $\GG(P)$ at the time when it was added, and the path vertices
at that time are either still on the path or were added to $S_1$,
not $S_2$, by the key property~\Cref{p:S_2 immediate}.
Thus each vertex in $S_2$ has at least $\frac{d}{2}$ of its $G'$-neighbours 
in $\GG(S_1 \cup P)$, so $e_{G'}(S_2,\GG(S_1 \cup P)) \ge s_2 \frac{d}{4}$.
However, $|P\cup S_1|\leq \ell+s_1$, so this contradicts the second condition of the theorem.
\end{proof}

\section{Applications}\label{sec:applications}

Now we will apply our main result, Theorem \ref{thm:DFSnew},
to prove the applications mentioned above,
i.e.~Theorems \ref{thm:ndlambda} and \ref{thm:DFSnew},
and an induced Ramsey result to be discussed below in Section \ref{sec:Ramsey}.

First we prove our result on long induced paths in expanders.
We use the following two well-known properties of $(n,d,\lL)$-graphs
that can be found e.g.~in the survey \cite{hoory2006expander}.

\begin{enumerate}[nosep]
\item[(E1)] Expander Mixing Lemma \cite[Lemma 2.4]{hoory2006expander}:
$\Big|e(A,B)-|A||B|\frac{d}{n}\Big|\leq \lambda\sqrt{|A||B|}$ for any $A,B \sub V(G)$.
\item[(E2)] Simplified Alon-Boppana Bound \cite[Claim 2.8]{hoory2006expander}:
 $\lambda^2 \ge d \cdot \frac{n-d}{n-1}$.
\end{enumerate}

\begin{proof}[ of Theorem \ref{thm:ndlambda}]
Suppose $G$ is a $(n,d,\lambda)$-graph with $\lambda<d^{3/4}/100$ and $d<n/10$. 
We will apply  Theorem \ref{thm:DFSnew} with $G'=G$, 
$\ell=s_1=\frac{n}{64d}$ and $s_2=\frac{\lambda^2}{d^2}n$. We note that $s_1+s_2+\ell<n$.
By Property (E2) we have $s_2 \ge \frac{n}{d} \cdot \frac{n-d}{n-1} > s_1$.

For condition (1), consider any $X,Y \sub V(G)$ with $|X| = s_1$ and $|Y| = \ell+s_1+s_2 < 3s_2$.
By Property (E1) we have
\[ e(X,Y)\leq s_1\cdot 3s_2\cdot \frac{d}{n}+\lambda \sqrt{s_1\cdot 3s_2}
\leq  \frac{\lambda^2}{d^2}n+\frac{\lambda^2}{d^{3/2}}n<\frac{n}{1000}<\frac{d}{4}s_1.\]
For condition (2), consider any $X,Y \sub V(G)$ with $|X| = 2s_1$ and $|Y| = s_2$.
As $G$ is $d$-regular we have $|\Gamma(X)|\leq d|X|$, so by Property (E1) we have
\[ e(\Gamma(X),Y)\leq |\Gamma(X)||Y|\frac{d}{n}+\lambda\sqrt{|\Gamma(X)||Y|}
\leq \frac{\lambda^2}{32d}n+ \frac{\lambda^2}{\sqrt{32}d}n\leq s_2\frac{d}{4}. \]
Since all conditions of Theorem \ref{thm:DFSnew} are satisfied,
there is an induced path of length $\ell=\frac{n}{64d}$.
\end{proof}

Now we prove  our result on long induced paths in graphs
satisfying an upper-uniformity edge-distribution condition.

\begin{proof}[ of Theorem \ref{thm:upperuniform}]
Let $G$ be a graph on $n$ vertices with minimum degree $d \ge 2^8$.
Suppose for some $C>1$ that for all $X,Y \sub V(G)$
of sizes $|X|=\frac{n}{2^4 Cd}$ and $|Y|= \frac{n}{2^8 C}$ we have 
$e(X,Y)< 2^5 C|X||Y|\frac{d}{n}$ and $e(\Gamma(X),Y) < C|X||Y|\frac{d^2}{n}$.
To prove the theorem, it suffices to show that the conditions of  \Cref{thm:DFSnew}
are satisfied with $\ell=s_1=\frac{n}{2^5 Cd}$ and $s_2=\frac{n}{2^9 C}$. First note that $s_1+s_2+\ell<n$.

For condition (1), consider any $X,Y \sub V(G)$ with $|X| = s_1$ and $|Y| = \ell+s_1+s_2$.
Enlarging $X$ and $Y$ to sizes $2s_1 = \frac{n}{2^4 Cd}$ and $2s_2 = \frac{n}{2^8 C}$, we have
$$e(X,Y)<2^5C|X||Y|\frac{d}{n}\leq 2^5C \cdot 2s_1 \frac{n}{2^8 C}\frac{d}{n} = s_1d/4,$$
as required.
For condition (2), consider any $X,Y \sub V(G)$ with $|X| = \ell+s_1$ and $|Y| =s_2$. 
Enlarging $Y$ to size $2s_2 = \frac{n}{2^8 C}$, we have
$$e(\Gamma(X),Y)<C|X||Y|\frac{d^2}{n}\leq C\frac{n}{2^4 Cd} 2s_2\frac{d^2}{n}< ds_2/4,$$
as required. This completes the proof.
\end{proof}

\subsection{Induced size-Ramsey number of paths} \label{sec:Ramsey}

Here we consider a problem on induced size-Ramsey numbers,
combining two well-studied extensions of the classical graph Ramsey problem,
in which one colours some `host' graph $G$ and seeks 
a monochromatic copy of some `target' graph $H$.
In the induced Ramsey problem, one seeks monochromatic induced copies of $H$
and in the size-Ramsey problem one aims to minimise the size $e(G)$ of $G$.
Induced size-Ramsey problems combine both of these features:
the $k$-colour induced size-Ramsey number $\hat{r}^k_{ind}(H)$ 
is the smallest integer $m$ such that there exists a graph $G$ on $m$ edges 
such that every $k$-colouring of the edges of $G$ contains a monochromatic copy of $H$ that is an induced subgraph of $G$. 
The main open problem in this direction is an old conjecture of Erd\H{o}s
that for any graph $H$ on $n$ vertices one has $\hat{r}^2_{ind}(H) \le 2^{O(n)}$.
For more background we refer the reader to the survey \cite{conlon2015recent}
and recent papers \cite{draganic2022size,bradavc2023effective}.

The case when $H=P_n$ is a path has a large literature in its own right.
Haxell, Kohayakawa and Łuczak~\cite{haxell1995induced} showed that $\hat{r}^k_{ind}(P_n)=O_k(n)$,
strengthening the analogous result of  Beck \cite{beck1983size} for the
(not necessarily induced) size-Ramsey number, 
which in itself was a $\$$100 problem of Erd\H{o}s.
While these results establish the order of magnitude as $O_k(n)$,
they do not say much about the implicit constant,
particularly in~\cite{haxell1995induced} due to the use of the regularity lemma.
Even for the two-colour size-Ramsey number of $P_n$,
there is a substantial constant factor gap 
between the best known lower bound \cite{bal2019new}
and upper bound \cite{dudek2017some}.
For the multicolour induced size-Ramsey number,
a significant recent improvement on~\cite{haxell1995induced}
by Dragani\'c, Glock and Krivelevich~\cite{draganic2022short}
gives $\hat{r}^k_{ind}(P_n)=O(k^3\log^4 k)n$.
We will improve this to $\hat{r}^k_{ind}(P_n)=O(k^3\log^2k)n$.

\begin{thm}\label{Gnp contains path}
    For $c=10^{5}$ and for all large enough $k\in \mathbb N$ the following holds. Let $G\sim G(nk,\frac{c\log k}{n})$. 
    Then with high probability, for every $k$-colouring of the edges of $G$, there exists a monochromatic path of length $\frac{n}{c^3k\log k}$ which is induced in $G$.
\end{thm}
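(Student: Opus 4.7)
The plan is to deduce Theorem~\ref{Gnp contains path} from Theorem~\ref{thm:DFSnew}. Set $N := nk$, $p := c\log k/n$, $\Delta := Np$, and $\ell := n/(c^3\log k)$. First I will record the standard whp properties of $G \sim G(N,p)$: (i) $d_G(v) = (1+o(1))\Delta$ for every vertex $v$; (ii) upper uniformity $e_G(A,B) \le 2|A||B|p$ for all $A,B$ with $|A||B|p$ above a mild threshold; (iii) the neighbourhood bound $|\Gamma_G(A)| \le 2|A|\Delta$ whenever $|A|\Delta \le N/2$. All three follow from Chernoff bounds and routine union bounds.

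Next, for an arbitrary $k$-colouring of $E(G)$, I will find a colour $i^\ast$ and a subset $V'\subseteq V$ of size $\Omega(n)$ in which every vertex has at least $d := c\log k/3$ neighbours of colour $i^\ast$. For each colour $i$, I iteratively delete any vertex with fewer than $d$ colour-$i$ neighbours in the current set, and call the resulting remainder $V_i$. If every $V_i$ were empty then every colour class would have lost all of its edges in the process, giving $e(G) \le Nkd < N\Delta/2$, in contradiction with~(i). For any colour $i^\ast$ with non-empty remainder $V' := V_{i^\ast}$, combining the min-degree condition with~(ii) applied to $V'\times V'$ gives $|V'|d \le e_G(V',V') \le 2|V'|^2 p$, hence $|V'| \ge d/(2p) = \Omega(n)$.

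Finally I will apply Theorem~\ref{thm:DFSnew} to the pair $(G[V'], G_{i^\ast}[V'])$ with $s_1 := \ell$ and $s_2$ of order $n$; since the produced path lies inside $V'$, induced-ness in $G[V']$ coincides with induced-ness in $G$ and monochromaticity is automatic. Condition~(1) of Theorem~\ref{thm:DFSnew} will follow directly from~(ii) via $e_{G_{i^\ast}}(X,Y) \le e_G(X,Y) \le 2|X||Y|p < ds_1/4$ once $|Y|$ is of order $n$. The hard part will be condition~(2), $e_{G_{i^\ast}}(\Gamma_G[X],Y) < ds_2/4$: I can only bound $e_{G_{i^\ast}}$ by the full $G$-count and $|\Gamma_G[X]|$ may be as large as $\Theta(|X|\Delta)$, so combining~(ii) and~(iii) yields an estimate of order $(k\log k/c)\,s_2$, which must then be dominated by $ds_2/4 = \Theta(\log k)\,s_2$. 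This enforces a balance of the shape $c^2 \gtrsim k$ that the specific constant $c = 10^5$ is engineered to accommodate, and the tight book-keeping of constants needed to make Theorem~\ref{thm:DFSnew} fire is the delicate point of the whole argument.
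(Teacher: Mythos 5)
Your high-level plan (extract a high--min-degree monochromatic subgraph, then invoke Theorem~\ref{thm:DFSnew}) matches the paper, but the proposal has a genuine gap that you yourself flag at the end, and the gap is fatal rather than a matter of ``tight book-keeping of constants''.

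You work with $\ell = s_1 = n/(c^3\log k)$ and $s_2 = \Theta(n)$, which makes the two sides of condition~(2) mismatch by a factor of $k$. Concretely, $\Delta\approx ck\log k$, so $|\Gamma_G[X]|$ can be as large as $\Theta(\ell\Delta)=\Theta(kn/c^2)$, and then $2|\Gamma_G[X]|\,|Y|\,p = \Theta\bigl(\tfrac{k\log k}{c}\bigr)s_2$ against the requirement $<\tfrac{d}{4}s_2=\Theta(c\log k)s_2$; this would need $c^2\gtrsim k$, which is false for $k\to\infty$ since $c=10^5$ is fixed. This is not something the constant $c$ can absorb. The paper's proof works with $\ell=s_1=s_2=\frac{n}{c^3 k\log k}$ (note the extra factor of $k$ in the denominator; the statement of the theorem appears to lack this factor, but the proof's value is the one consistent with the $O(nk^3\log^2 k)$ size-Ramsey corollary). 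With those parameters $|\Gamma_G[X]|=O(n/c^2)$, the stray factor of $k$ disappears, and condition~(2) reduces to a constant inequality in $c$ alone.

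There is also a second, independent issue: even with the right scales, you cannot justify condition~(2) by ``combining~(ii) and~(iii)'', because~(ii) is a whp statement about a \emph{fixed} pair of sets, whereas $\Gamma_G[X]$ is a random set of size $\Theta(n/c^2)$ and a naive union bound over its possible values is too costly (the entropy of $\Gamma_G[X]$ is $\Theta\bigl(\tfrac{n}{c^2}\log k\bigr)$, which dominates the available Chernoff exponent $\Theta\bigl(\tfrac{n}{c^4 k}\bigr)$). The paper instead proves the bound on $e_G(\Gamma_G[X],Y)$ directly: it splits into $e(X,X\cup Y)$ (handled by ordinary upper-uniformity for fixed sets) plus $e(N(X),Y\setminus X)$, observes that the latter is independent of the edges at $X$, conditions on the event $|N(X)|\le 2n/c^2$, and only then applies Chernoff with a union bound over $X,Y$ of sizes $O(\ell)$. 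This conditioning/decomposition step is the part your proposal is missing. Your device for locating the high--min-degree colour class (strip low-degree vertices in every colour class simultaneously and argue by an edge count) is a valid variant of the paper's approach of stripping the densest colour class and invoking bound~(c); that part is fine.
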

\begin{proof}
 Let $p=\frac{c\log k}{n}$ and let $G\sim G(nk,p)$. We first show that for $\ell=s_1=s_2=\frac{n}{c^3k\log k}$ the following hold with high probability
\begin{enumerate}[label=(\alph*)]
    \item \label{local-density1} $e_G(X,Y)< s_1 \frac{c\log k}{16}=\frac{n}{16kc^2}$ for all  $X\subseteq V(G)$ with $|X| = s_1$ and $|Y| = \ell+s_1+s_2$.
    \item\label{local-density} $e_G(\Gamma_G(X),Y)\leq s_2 \frac{c\log k}{16}=\frac{n}{16kc^2}$ for all disjoint $X,Y\subseteq V(G)$ with $|X| = |Y| = 2\ell$.
    \item\label{edge count for last part} $e_G(X)\leq \frac{n}{4c^2k}$ for $|X|=s_1+s_2+\ell$.   
\end{enumerate}

For~\ref{local-density1}, for any such $X,Y$ we have $\mb{E} e(X,Y) = p|X||Y| = \frac{3n}{c^5 k^2 \log k}$,
so by Chernoff the bound fails with probability at most $e^{-\frac{n}{kc^2}}$, say, and we can take a union bound 
over  $\binom{nk}{\ell}\binom{nk}{3\ell}\leq e^{\frac{10n}{kc^3}}$ choices for $X,Y$. Part~\ref{edge count for last part} is similar.
For~\ref{local-density}, we first note that similarly to~\ref{local-density1} we can assume for any such $X,Y$ that 
$e(X,Y) < \frac{n}{32kc^2}$, so it suffices to show $e(N(X),Y ) < \frac{n}{32kc^2}$, where $N(X)=\GG(X) \sm X$.
Also, by Chernoff we have $\mb{P}( e(X,V(G)) > 2n/c^2 ) < e^{-\frac{n}{10c^2}}$, 
so by a union bound we can assume $|N(X)| \le e(X,V(G)) \le 2n/c^2$. 
We note that $e(N(X),Y)$ is independent of the edges incident to $X$,
so conditional on $|N(X)| \le e(X,V(G)) \le 2n/c^2$, 
Chernoff gives $\mb{P}( e(N(X),Y ) > \frac{n}{32kc^2} ) <  e^{-\frac{n}{800kc^2}}$,
so we can take a union bound.

Let $G_0$ be the subgraph of $G$ consisting of the edges of the densest colour class. 
As whp $G$ has $(0.5-o(1))nck^2\log k$ edges, $G_0$ has at least $(0.5-o(1))nck\log k$ edges, 
so has average degree at least $c\log k/2$. Now, let $G'$ be a subgraph of $G_0$ 
obtained by arbitrarily removing vertices with degree less than $c\log k/4$ one by one until this is no longer possible. 
Note that removing vertices in this way can not decrease the average degree.
This process must stop with more than $s_1+s_2+\ell$ vertices remaining,
otherwise we reach a set of $s_1+s_2+\ell$ vertices spanning 
at least $(s_1+s_2+\ell)c\log k/4\geq\frac{n}{2c^2k}$ edges, 
but this contradicts~\ref{edge count for last part}.
To conclude, we apply Theorem~\ref{thm:DFSnew} to the graphs $G[V(G')]$ and $G'$
with parameters $s_1=s_2=\ell$ and $d= c\log k/4$, 
noting that \ref{local-density1} and \ref{local-density} 
give the required conditions of the theorem.
\end{proof}

Since $G(nk,\frac{c\log k}{n})$ whp has $\Theta(nk^2\log k)$ edges,
Theorem~\ref{Gnp contains path} immediately gives the following.

\begin{cor}
Multicolour induced size-Ramsey numbers of paths satisfy
$\hat{r}_{ind}^k(P_n)= O(nk^3\log^2 k)$.
\end{cor}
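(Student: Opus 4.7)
The plan is to deduce the corollary directly from Theorem~\ref{Gnp contains path} via a suitable reparametrization and a routine edge count. Writing $n$ for the target path length (as in the statement), I need to exhibit a host graph $H$ on $O(nk^3\log^2 k)$ edges such that every $k$-edge-colouring of $H$ contains a monochromatic copy of $P_n$ which is induced in $H$.

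Concretely, I would set $m := c^3 n\log k$ and take $H \sim G\bigl(mk,\, \tfrac{c\log k}{m}\bigr)$. Theorem~\ref{Gnp contains path}, applied with the parameter $m$ in place of $n$, then guarantees that whp every $k$-colouring of $H$ yields a monochromatic path of length $\frac{m}{c^3\log k} = n$ which is induced in $H$, that is, a monochromatic induced copy of $P_n$. The remaining task is to control the size of $H$: by linearity of expectation,
\[
\mb{E}\,e(H) \;=\; \binom{mk}{2}\cdot \frac{c\log k}{m} \;=\; \Theta(mk^2\log k) \;=\; \Theta(nk^3\log^2 k),
\]
and a standard Chernoff bound shows that $e(H)$ concentrates sharply around this value. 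A union bound over these two whp events then produces a specific instance of $H$ on $O(nk^3\log^2 k)$ edges serving as the desired host, yielding $\hat{r}^k_{ind}(P_n) = O(nk^3\log^2 k)$.

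There is no substantial obstacle in this deduction; all the genuine content already sits in Theorem~\ref{Gnp contains path}. The only care needed is in the scaling $m \asymp n \log k$, chosen so that the guaranteed monochromatic induced path reaches length exactly $n$, while the expected edge count of the host graph remains $O(nk^3\log^2 k)$. The rest is purely bookkeeping, and indeed the proof is effectively the one-line remark preceding the corollary in the paper.
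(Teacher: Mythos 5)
Your proposal is correct and is essentially the paper's own argument, merely making explicit the reparametrization $m = c^3 n\log k$ that the paper leaves implicit when it notes that $G(nk,\tfrac{c\log k}{n})$ whp has $\Theta(nk^2\log k)$ edges and invokes Theorem~\ref{Gnp contains path}. The edge-count computation and the union bound over the two whp events are both routine and match the intended deduction.
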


\noindent {\bf Acknowledgement.}
We thank an anonymous referee for reading the paper carefully.

\begingroup
\small

\endgroup

\end{document}